\newtheorem{thm}{Theorem}
 \theoremstyle{definition}
 \newtheorem{defn}{Definition}
\newcommand{\pmin}{p^{\text{min}}}
\newcommand{\pmax}{p^{\text{max}}}
\newcommand{\fmin}{f^{\text{min}}}
\newcommand{\fmax}{f^{\text{max}}}
\newcommand{\argmin}{\text{argmin}}
\newcommand{\ac}{\mathcal{A}}
\begin{document}
	\title{Statistical Learning for DC Optimal Power Flow}
	
	\author{Yeesian Ng\IEEEauthorrefmark{1}, Sidhant Misra\IEEEauthorrefmark{2}, Line A. Roald\IEEEauthorrefmark{2}, Scott Backhaus\IEEEauthorrefmark{2}
	\thanks{\IEEEauthorrefmark{1}: Operations Research Center, Massachusets Institute of Technology, Cambridge, United States. Email: yeesian@mit.edu}
	\thanks{\IEEEauthorrefmark{2}: Los Alamos National Laboratory, Los Alamos, NM, USA. Email: \{sidhant, roald, backhaus\}@lanl.gov}
	\vspace*{-1em}}
		
	\maketitle
	
	\begin{abstract}
	The optimal power flow problem plays an important role in the market clearing and operation of electric power systems. However, with increasing uncertainty from renewable energy operation, the optimal operating point of the system changes more significantly in real-time.
	In this paper, we aim at developing control policies that are able to track the optimal set-point with high probability. The approach is based on the observation that the OPF solution corresponding to a certain uncertainty realization is a basic feasible solution, which provides an affine control policy. The optimality of this \emph{basis policy} is restricted to uncertainty realizations that share the same set of active constraints. We propose an \emph{ensemble control policy} that combines several basis policies to improve performance. Although the number of possible bases is exponential in the size of the system, we show that only a few of them are relevant to system operation. We adopt a statistical learning approach to learn these important bases, and provide theoretical results that validate our observations. For most systems, we observe that efficient ensemble policies constructed using as few as ten bases, are able to obtain optimal solutions with high probability.

 	\end{abstract}

	\begin{IEEEkeywords}
		Optimal Power Flow, Uncertainty, Statistical Learning, Ensemble Methods
	\end{IEEEkeywords}
	
	\section{Introduction}
	
    The DC Optimal Power Flow (OPF) is a widely used optimization problem in power systems operation and market clearing. The OPF attempts to find the most economic dispatch of generators that satisfy the demand and technical constraints in the system. With increasing levels uncertainty due to higher renewable penetration, the generation dispatch requires larger and more frequent adjustments in real-time in order to maintain power balance and feasibility at all times. In the literature, it is often assumed that the generators respond to such short term uncertainty according to an affine control policy \cite{borkowska, vrakopoulou2012, roald2017corrective}, which is a good representation of the Automatic Generation Control (AGC) commonly used in system operation.
    
    While affine control policies perform well when the uncertainty is limited, it is advantageous to consider more general control policies in the case of larger deviations. For example, it was shown that a piece-wise affine (PWA) policy that models the activation of tertiary reserves in response to large uncertainty realizations is more economic \cite{roald2015optimal}. 
    In this paper, we investigate the characteristics of a control policy which is able to track the optimal solution to the real-time OPF, which is a Linear Program (LP) whose parameters are dictated by the realization of the uncertainty. By tracking the manifold of OPF solutions,
    this policy can provide significant improvements in terms of economy and ensuring system feasibility, particularly when the uncertainty is large.
    
    It is known in the Model Predictive Control (MPC) literature that the optimal policy is PWA,     
    and can be computed explicitly by recursively  partitioning the space of parameters into the so-called ``critical regions" for each affine piece \cite{bemporad2002model,borrelli2003geometric,alessio2009survey}. 
    This observation was used in \cite{Vrakopoulou2017-ek} to show that the optimal policy to the OPF is piecewise affine, by utilizing the Multi-parametric Toolbox (MPT \cite{herceg2013multi}). However, the result was restricted to small system sizes, owing to the fact that the MPT methods do not scale very well to large dimensions. 
    Even approximate techniques for the computation of PWA policies  \cite{parisini1995receding,bemporad2003suboptimal,johansen2003approximate,de2004robust,christophersen2007controller,canale2009set,bemporad2011ultra} are still not practical for instances of realistic sizes.
    
    In this paper, we propose a novel and scalable approach to solve the real-time OPF that leverages statistical learning to learn certain important features of the OPF solution, and construct a PWA policy, referred to as the \emph{ensemble control policy}, based on these features. 
    The method exploits the fact that the optimal solution to a LP is basic feasible, i.e., it is determined by a set of active constraints that are satisfied with equality. The ensemble control policy uses a collection of bases (independent vectors in ${\Re}^n$) where each basis corresponds to a set of active constraints. The complexity of the ensemble policy is governed by the number of bases used in its construction.
    
	Although the number of possible bases is exponentially large in the size of the system, typically only a few are relevant within the operational time frame.
	Our statistical learning approach is able to effectively identify and select the most important bases in cases where the number of bases is indeed small. Furthermore, we present a statistical results which allows us to identify and diagnose exceptional systems for which there are a large number of important bases and our method is likely to produce sub-optimal results.
	
    This paper combines the above observations to construct a control policy which tracks the solution of the real-time OPF by (i) using off-line learning to identify the important bases, and (ii) implementing a real-time system control using an efficient ensemble policy constructed based on a small number of bases. This two-step procedure allows us to construct policies with high probability of providing optimal solutions, even for large scale systems.

    Although the main focus of this paper is the real-time OPF, the learning based framework generalizes to other applications such as learning and predicting the behavior of locational marginal prices (LMPs) \cite{Geng2015-af}. Furhter, the method can be extended to non-linear problems such as the real-time AC-OPF, that cannot be addressed by existing methods.
    
    The remainder of the paper is organized as follows. Section \ref{sec:theory} describes the OPF problem formulation. Section \ref{sec:theoryLP} discusses the construction of the ensemble policies based on LP theory, while Section \ref{sec:statlearning} describes the learning procedure. Extensive numerical results for a range of test cases are provided in Section \ref{sec:numerical-results}, while Section \ref{sec:conclusions} summarizes and concludes the paper.
    \section{Problem Formulation} \label{sec:theory}
	
	The electric transmission network is assumed to be a graph $\mathcal{G}=(\mathcal{V,\mathcal{E}})$, where $\mathcal{V}$ denotes the nodes of the graph corresponding to the buses with $|\mathcal{V}| = v$, and the edges $\mathcal{E}$ denote the transmission lines with $|\mathcal{E}|=m$. The total number of generators is given by $n$.
	
	We begin by stating the OPF problem for a given uncertainty realization:
	\begin{subequations} \label{eq:opf}
	\begin{align}
        \rho^*(\omega)\in\underset{p}{\text{argmin}}\ &c^\top p \\
    	\text{s.t.}\ &\pmin\leq p\leq\pmax \\
    	&\fmin\leq M(Hp+\mu+\omega-d)\leq\fmax \\
	    &e^\top p = e^\top(d-\mu-\omega)
	\end{align}
	\end{subequations}
	
    Here, the generated active power at each generator $p\in\Re^n$ are the decision variables, with a linear cost coefficient $c$.
	The parameters $\mu\in\Re^{v}$ represent the forecasted production of non-dispatchable active power (e.g. from wind or solar PV), $\omega\in\Re^{v}$ is the uncertain deviation from the forecasted value and $d\in\Re^{v}$ is the vector of demands. The vectors $\pmin,\pmax\in\Re^n$ correspond to the minimum and maximum power generation limits, and $\fmin,\fmax\in\Re^m$ encode the minimum and maximum transmission flow limits. Further, $e$ is the vector of ones, $H\in\Re^{v\times n}$ is the matrix mapping the power from each generator to their corresponding bus, and $M\in\Re^{m\times v}$ is the matrix of power transfer distribution factors \cite{christie2000transmission}. The set of all uncertainty realizations $\omega$ is denoted by $\Omega$.
	
	Let $\mathcal{P}^*(\omega)$ denote the set of minimizers of the optimization problem in \eqref{eq:opf}. At this point, it is prudent to exclude uncertainty realizations $\omega$ from the set $\Omega$ for which there exists no feasible generation dispatch in \eqref{eq:opf}. Therefore, in the following analysis, we restrict ourselves to the set $\Omega^R:=\{\omega:\mathcal{P}^*(\omega)\neq\emptyset\}$ of \textit{recoverable} scenarios, i.e., the scenarios for which a feasible solution can be found.
	
	For each $\omega \in \Omega^R$, the set $\mathcal{P}^*(\omega)$ can be interpreted as the set of the best possible generation dispatch actions for the uncertainty realization $\omega$. This motivates the definition of an \emph{optimal control policy} below.
	\begin{defn} \label{def:optimal_policy}
	A \emph{control policy} $\rho:\Omega\rightarrow\Re^n$ is a mapping that adjusts the generation in response to uncertainty. It is said to be \textit{optimal} if $\rho(\omega) \in \mathcal{P}^*(\omega)$ for all $\omega \in \Omega^R$.
	\end{defn}
	By definition, $\rho^*(\cdot)$ as defined in \eqref{eq:opf} is an optimal control policy. We note that the problem in \eqref{eq:opf} belongs to the class of \emph{Parametric Linear Programming} problems, with the uncertainty $\omega$ serving the role of the parameter. For such problems, there always exists an optimal control policy that is a continuous and PWA function of $\omega$ over the entire domain $\Omega^R$ \cite{walkup1969lifting,alessio2009survey}.

	\section{Polyhedral Theory and Construction of Ensemble Policies} \label{sec:theoryLP}
	Although modern LP solvers are efficient, computing $\rho^*(\cdot)$ can still require significant on-line computation. Moreover, it does not provide insight into the dependence of the decisions $\rho(\omega)$ on the forecast error $\omega$. In this section, we review results from polyhedral theory to gain insight into the structure of the optimal PWA policies. We then describe a procedure to construct a particular kind of PWA control policies, termed \emph{Ensemble Policies}, based on this insight.
	
	\subsection{Polyhedral Theory of Linear Programming}
	First, we observe that for each uncertainty realization $\omega$, the OPF in \eqref{eq:opf} is a LP whose feasible set is the polyhedron given by
    \begin{align}
    	\mathcal{P}(\omega)=\{p\in\Re^n:\ &\pmin\leq p\leq\pmax,  \nonumber \\
	    &\fmin\leq M(Hp+\mu+\omega-d)\leq\fmax, \nonumber \\
	    &e^\top p = e^\top(d-\mu-\omega)\}. \label{eq:polyhedron}
	\end{align}
	It is a well-known property of LPs that the optimal solution to a non-degenerate instance of \eqref{eq:opf} lies at a corner of the above polyhedron (see Theorem~\ref{thm:bfs}). 
	We recall some definitions related to this fact.
	
	\begin{defn}
	For a generation dispatch vector $p\in\Re^n$,
	\begin{itemize}
		\item[(a)] $p$ is a \textit{basic} solution if it satisfies the power balance constraint $e^\top(p+\mu+\omega-d)=0$ and $(n-1)$ other linearly independent constraints that are active at $p$,
		\item[(b)] $p$ is a \textit{basic feasible} solution (BFS) if it is a basic solution that satisfies all of the constraints, i.e. $p\in \mathcal{P}(\omega)$.
	\end{itemize}
	\end{defn}
	
	\begin{thm} \cite{bertsimas1997introduction} \label{thm:bfs}
	    For any $\omega\in\Omega^R$, there exists a basic feasible solution $p^*\in P^*(\omega)$. \label{thm:BFS}
	\end{thm}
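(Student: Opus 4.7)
The plan is to reduce the statement to the classical linear programming fact that any LP whose feasible polyhedron contains at least one extreme point and whose optimal value is finite admits a basic feasible optimizer. The argument then comes down to verifying these two structural conditions for $\mathcal{P}(\omega)$ as defined in \eqref{eq:polyhedron}.

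First, I would verify that $\mathcal{P}(\omega)$ possesses at least one basic feasible solution. By the standard characterization, a polyhedron has a BFS if and only if it contains no line. It suffices then to observe that the bound constraints $\pmin \le p \le \pmax$ confine $\mathcal{P}(\omega)$ to a compact box in $\Re^n$, which precludes any line. In particular, $\mathcal{P}(\omega)$ is a (possibly empty) bounded polyhedron for every $\omega \in \Omega$.

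Second, I would argue that an optimum is attained. Since $\omega \in \Omega^R$ by hypothesis, $\mathcal{P}(\omega)$ is nonempty; combined with the compactness from the previous step, continuity of $c^\top p$ over a compact set yields a minimizer, so $\mathcal{P}^*(\omega) \neq \emptyset$ and the optimal value is finite. Finally, I would invoke the standard theorem from polyhedral LP theory (e.g., Bertsimas--Tsitsiklis, Theorem 2.7) to conclude that among all optimizers there must be at least one that is a BFS, producing the desired $p^* \in \mathcal{P}^*(\omega)$.

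The main obstacle here is essentially nonexistent: the entire argument is a textbook appeal to classical LP theory, made possible by the generator bound constraints which guarantee that $\mathcal{P}(\omega)$ is pointed. Were those bounds absent, a separate verification that $\mathcal{P}(\omega)$ contains no line would be required, amounting to a rank analysis of the equality constraint $e^\top p = e^\top(d-\mu-\omega)$ together with the PTDF matrix $M$. In the present formulation, however, the proof is essentially a definitional unpacking followed by a citation.
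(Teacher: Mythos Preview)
Your proposal is correct and essentially matches the paper's treatment: the paper does not prove this theorem at all but simply cites it as a standard result from \cite{bertsimas1997introduction}, and your write-up is precisely the textbook justification (boundedness from the generator limits $\Rightarrow$ pointed polyhedron $\Rightarrow$ existence of a BFS optimizer) that underlies that citation.
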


	Note that if the problem is degenerate, several corners and the faces between them might be optimal.
	
	For the OPF problem, each BFS corresponds to a set of active line and generator constraints. 
	We next show how each such solution corresponds to an optimal basis, which can be used to define an affine control policy.

    \subsection{From Basic Feasible Solutions to Affine Control Policies}
    The optimization problem has $n$ decision variables $p\in\Re^n$. At a BFS $p^*\in\mathcal{P}(\omega)$ for a given $\omega\in\Omega^R$, exactly $n$ linearly independent constraints in \eqref{eq:polyhedron} are satisfied with equality, such that the generator output is uniquely determined. We will use this fact to construct an affine policy for $p$ as a function of $\omega$. 
    
    Since the power balance constraint $e^\top p = e^\top(d-\mu-\omega)$ is always satisfied with equality, there must be $n-1$ remaining rows of $A$ that correspond to equality constraints.
    For ease of exposition, we re-write the constraints describing the polyhedron $\mathcal{P}(\omega)$ in \eqref{eq:polyhedron} compactly in the following form
    \begin{align} \label{eq:polyhedron_matrix}
        \mathcal{P}(\omega) = \{p: \ Ap \leq b+C\omega,  \quad 
                        e^\top p = e^\top(d-\mu-\omega)    \},
    \end{align}
    where 
    \begin{align}
        A \!=\! &\left[ \begin{array}{r}
            \! I \\
            \! -I \\
            \! MH \\
            \! -MH
        \end{array}
        \right]\!\!\in\Re^{2(n+m)\times n}, \quad
        C \!=\! \left[ \begin{array}{r}
            \! 0 \\
            \! 0 \\
            \! -M \\
            \! M
        \end{array}
        \right]\!\!\in\Re^{2(n+m)\times v}, \nonumber  \\[+2pt]
         b \!=\! &\left[ \begin{array}{l}
            \! ~~\pmax \\
            \! -\pmin \\
            \! ~~\fmax - M(\mu-d) \\
            \! -\fmax + M(\mu-d)
        \end{array}
        \right]\!\!\in\Re^{2(n+m)}. \nonumber
    \end{align}
    
    Let $\ac = \{i_1, i_2, \ldots, i_{n-1}\}$ denote the indices of $n-1$ linearly independent rows of $A$. The basis matrix $B \in {\Re}^{n \times n}$ is then defined as
    \begin{align} \label{eq:basis-matrix}
        B = \left[ \begin{array}{c}
             A_{\ac}  \\
             e^\top 
        \end{array}
        \right],
    \end{align}
    where $A_{\ac}$ is the submatrix of $A$ formed by the rows in $\ac$. 
    As we find the set $\ac$ easier to work with, but the notion of a basis $B$ more conceptually useful, we will switch between them interchangeably through \eqref{eq:basis-matrix}. 
    By the definition of a basic solution, we have that
    \begin{align} \label{eq:affine_form}
        p^* = B^{-1} \left[ \begin{array}{c}
             b_{\ac} + C_{\ac} \omega  \\
             e^\top(d-\mu-\omega) 
        \end{array} \right] =: \rho^{\ac}(\omega).
    \end{align}
    Namely, we can view $p^*$ as the output of an affine policy $\rho^{\ac}(\cdot)$ evaluated at $\omega$.
    Note that the affine policy \eqref{eq:affine_form} can be evaluated for any uncertainty realizations $\omega\in\Omega$, leading to an adjusted set of generation outputs $p$. This observation leads to the following definition.
    
    \begin{defn}
	For any set $\ac$ of $n-1$ linearly independent active rows of $A$, we define the \textit{basis policy} $\rho^{\ac}(\cdot)$ as in \eqref{eq:affine_form}.
	\end{defn}
	
	\subsection{Analysis of the Affine Basis Policies}

    Each basis policy $\rho^{\ac}(\cdot)$ corresponds to a set of active constraints, which are sometimes refferred to as system pattern regions \cite{Geng2015-af}.
    This allows us to interpret their behavior. 
    For example, based on the characterization of the basis in \eqref{eq:basis-matrix} and \eqref{eq:affine_form}, we can classify the generators in the system into the following categories:
	\begin{itemize}
		\item[(i)] $I^{\text{UB}} = \{i: p_i=\pmax_i\}$, the indices of generators that should be set to their maximum generation limit,
		\item[(ii)] $I^{\text{LB}} = \{i: p_i=\pmin_i\}$, the indices of generators that should be set to their minimum generation limit, and
		\item[(iii)] $I^{\text{vary}} = [n] \setminus (I^{\text{UB}}\cup I^{\text{LB}})$, the indices of generators that should vary linearly as a function of $\omega$ by solving the remaining system of $n-1-|I^{\text{UB}}|-|I^{\text{LB}}|$ linear equations specified by the rows of $\ac$ indexed by $I^{\text{vary}}$.
	\end{itemize}

    The affine basis policy will provide optimal solutions for all $\omega$ which share the same set of active constraints specified by the basis matrix $B$. However, it might not perform very well for other realizations $\omega\in\Omega^R$, as the solution it provides might either violate some of the constraints in $\mathcal{P}(\omega)$ or be suboptimal in terms of generation cost.
    
    To assess the performance of any basis policy $\rho^{\ac}(\omega)$ more precisely, we define the sets
	\begin{align*}
	&\Omega_{\ac} := \Omega_{\rho^\ac} = \{\omega: \rho^{\ac}(\omega)\in \mathcal{P}(\omega) \} \\
	&\Omega_{\ac}^* := \Omega_{\rho^\ac}^* = \{\omega: \rho^{\ac}(\omega)\in \mathcal{P}^*(\omega) \}
	\end{align*}
	corresponding to the set of scenarios for which $\rho^{\ac}(\cdot)$ provides a feasible and optimal power generation solution, respectively. It is not hard to verify that
	\begin{equation*}
	\Omega_{\ac}^*\subseteq\Omega_{\ac}\subseteq\Omega^R,
	\end{equation*}
	which says that the set of scenarios for which a given basis is optimal is contained within the set of scenarios for which the basis is feasible. The scenario set $\Omega_{\ac}^*$ corresponds to the set of scenarios that share the same set of active constraints and hence the same optimal basis. 

	\subsection{Ensemble Policies}
    Having established that each uncertainty realizations corresponds has a corresponding affine policy that can be derived from the optimal basis, we look at an approach to form a control policy based on an ensemble of basis policies. This \emph{ensemble policy} is optimal (respectively feasible) over a wider range of scenarios than any of its constituent basis policies.
    
	We denote the total number of bases by $b$, and observe that
	  \begin{equation} \label{eq:num_bases}
        b \leq \binom{2(n+m)}{n}.
    \end{equation}
    For an arbitrary ordering $\mathcal{A}^{(1)},\dots,\mathcal{A}^{(b)}$ of the bases, we obtain a corresponding sequence of basis policies $\rho^{(1)},\dots,\rho^{(b)}$, where $\rho^{(i)}:=\rho^{A^{(i)}}$ is the policy induced by the $i$-th basis. For any given subset $I\subseteq [b] = \{1,2,\ldots,b\}$ of the indices, we construct the ensemble policy as described below.
	
    \vspace*{0.5em}
    \noindent\rule{\linewidth}{1pt} \\[0pt]
    \noindent \textbf{Ensemble Control Policy:} $I \subseteq [b]$  \\[-4pt]
    \noindent\rule{\linewidth}{1pt}
    \noindent  The ensemble control policy for a given $I$ is given by 
    \begin{align} \label{eq:ensemble_policy}
        \rho^I(\omega) :=\underset{\rho^{(i)}(\omega): i\in I}{\argmin}\ &c^\top \rho^{(i)}(\omega) \\
    	\text{s.t.}\ &\rho^{(i)}(\omega)\in \mathcal{P}(\omega) \nonumber
    \end{align}
    \vspace*{0.5em}
    \noindent\rule{\linewidth}{1pt}
    
	The feasible domain of the ensemble policy in \eqref{eq:ensemble_policy} is given by the union of the feasible domains of the basis policies $\Omega^I:=\cup_{i\in I}\Omega_{\rho^{(i)}}$. If there are more than one feasible policy for the given scenario, the ensemble policy selects the solution that minimizes the cost.
	The selection of the lowest cost, feasible solution can be carried out by a simple exhaustive evaluation of the cost and feasibility of each basis policy $\rho^{(i)}$ for $i \in I$.
	
	If we choose $I = [b]$ and form an ensemble based on all of the bases of the LP, then by Theorem~\ref{thm:BFS} the resulting policy $\rho^{[b]} = \rho^*$ is an optimal control policy.
	However, since $b$ is typically exponentially large in the size of the network, it is computationally prohibitive to use the full ensemble policy $\rho^{[b]}(\cdot)$. To address this, we provide a framework that allows us to trade-off between (i) the computational complexity and (ii) the feasibility and optimality of the solutions provided by the resulting policy. The key component of this framework is to constructing ensemble policies of increasing complexity given by $\rho^{[1]}, \rho^{[2]}, \ldots, \rho^{[b]}$, where $\rho^{[i]}$ refers to the ensemble policy formed by the set of basis policies $\{\rho^{(1)}, \ldots, \rho^{(i)}\}$. The number of bases $i$ we choose to include allows us to tradeoff computation and performance.
	
	In this approach, the ordering of the bases (i.e., \emph{which} $i$ bases we choose to include first) is critical in determining how many bases we need to achieve our desired level of trade-off.
	Intuitively, we wish to place the \emph{important} bases that have the highest probability of providing an optimal solution for a given realization $\omega$, earlier in the sequence, and the ones less relevant for operational practice (e.g. bases containing constraints that are typically never encountered to be tight during operations) later in the sequence.
    To determine this ordering, we adopt a statistical learning based approach to identify the bases that are most relevant to the scenarios that would arise in practice.
	
	\section{Using Statistical Learning to Identify Important Bases}
	The importance of a basis can be quantified by the probability that it is optimal for the OPF. Let $\mathbb{P}_{\omega}$ denote the probability distribution of $\omega$. This induces a probability distribution over the set of all bases $[b]$ that describes the probability of each of them to be optimal for the OPF problem \eqref{eq:opf}
    \begin{defn} \label{def:pi_def}
        For any basis $\ac$ we denote by $\pi(\ac)$ the probability that it is optimal for the OPF problem\footnote{Note that, due to degeneracy, there may be multiple bases that may be optimal for a given uncertainty realization $\omega$ in \eqref{eq:pi_def} . To make the definition consistent, any tie-breaking rule which ensures that the same basis will always be defined as optimal for a given $\omega$ is sufficient.}.
        \begin{align} 
            \pi(\ac) = \mathbb{P}_{\omega}(\Omega_{\mathcal{A}}^*) = \mathbb{P}_{\omega}(\ac \mbox{ is optimal for } \eqref{eq:opf}). \label{eq:pi_def}
        \end{align}
        For any ensemble $I \subseteq [b]$ of the set of all bases, we define
        \begin{align} 
            \pi(I) = \sum_{i \in I} \pi(\ac^{(i)}).  \label{eq:coverage_def}
        \end{align}
    \end{defn}
    In this section, we present a method to identify the most important (i.e., most probable) bases based on statistical learning. The key idea is to observe the relevant basis and their probabilities by evaluating \eqref{eq:opf} for a large number of uncertainty samples $M$. The approach has two main components.  
    First, in order to ensure that the ensemble policy performs well, we would like to guarantee that we are able to discover a set of bases which captures a large fraction of the probability mass within $\Omega^R$ using a limited number of samples $M$. To achieve this, we establish a criterion which provides such a statistical guarantee if it is satisfied.
    Second, we suggest to form a reduced ensemble policy (including only a sub-set of the discovered bases) by ordering the bases based on their empirical probability of occurrence.
    
    \subsection{Statistical Results for Unobserved Bases}
    \label{sec:statlearning}
   In this section, we provide a criterion to check whether the set of observed bases covers a significant amount of the probability mass, and show that the quantity that provide such guarantees is related to the so-called \emph{rate of discovery} of previously unobserved bases.
    
    \begin{defn} \label{def:unobserved_set}
    Let $\omega_1, \ldots, \omega_M$ be $M$ i.i.d. samples drawn from the uncertainty distribution and $\ac_1, \ldots, \ac_M$ denote the optimal basis corresponding to the OPF solution for each $\omega_i$. We call $\mathcal{O}_M = \cup_{i=1}^M \{\ac_i\}$ the set of \emph{observed bases}, and $\mathcal{U}_M = \mathcal{B} \setminus \mathcal{O}_M$ the set of \emph{unobserved bases}.
    \end{defn}
    
    We now define the rate of discovery, which quantifies the fraction of samples that correspond to observing a basis that was not observed before.
    
    \begin{defn} \label{def:rod}
    Let $W$ be a positive integer denoting the window size. Let $\omega_1, \ldots, \omega_{M+W}$ be $M+W$ i.i.d. samples drawn from the uncertainty distribution and let $\ac_i$ denote the optimal basis corresponding to the OPF solution for $\omega_i$. We denote by $X_i$ the random variable that encodes whether a new basis was observed in the $(M+i)^{th}$ sample, i.e.
     \begin{align}
	        X_i = \begin{cases}
	                    1, \ \mbox{if} \ \ac_{M+i} \notin \{\ac_1\} \cup \ldots \cup \{\ac_{M}\}, \\
	                    0, \ \mbox{otherwise}.
	               \end{cases}
	    \end{align}
	    Then the \emph{rate of discovery} over the window of size $W$ is given by $\mathcal{R}_W$ and is defined as
	    \begin{align} \label{eq:R_def}
	        \mathcal{R}_W  = \frac{1}{W} \sum_{i=1}^{W} X_i,
	    \end{align}
    \end{defn}
    
    The following theorem guarantees that the rate of discovery is unlikely to fall below a certain threshold if the mass of the unobserved set is large.
    
    \begin{thm} \label{thm:coverage_test}
       Let the unobserved set $\mathcal{U}_M$ and the rate of discovery $\mathcal{R}_W$ be defined as in Definition~\ref{def:unobserved_set} and \ref{def:rod}. Let $\epsilon$ and $\delta$ be given positive numbers, corresponding to the probability mass of the unobserved set and the confidence of our experiment, respectively. Then
       \begin{align}
           \mathbb{P}\left(\mathcal{R}_W < \frac{\epsilon}{2} \mid \pi(\mathcal{U}_M) > \epsilon \right) < \delta,
       \end{align}
       provided that the window size $W$ satisfies
       \begin{align}
           W > \frac{8}{\epsilon} \log \frac{1}{\delta}. \label{eq:window_size}
       \end{align}
    \end{thm}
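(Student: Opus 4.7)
The plan is to reduce this to a standard concentration bound for i.i.d.\ Bernoulli random variables. The key structural observation is that, once the first $M$ samples are fixed, the observed set $\mathcal{O}_M$ and hence $p := \pi(\mathcal{U}_M)$ are deterministic, and the subsequent samples $\omega_{M+1},\ldots,\omega_{M+W}$ are independent of the first $M$. Since $X_i$ is the indicator of the event $\{\ac_{M+i} \in \mathcal{U}_M\}$, which has probability $\pi(\mathcal{U}_M)=p$ under $\mathbb{P}_\omega$, the random variables $X_1,\ldots,X_W$ are conditionally i.i.d.\ Bernoulli$(p)$ given $\omega_1,\ldots,\omega_M$. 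So, conditionally on the full history with $\pi(\mathcal{U}_M) > \epsilon$, the rate of discovery $\mathcal{R}_W$ is a sample mean of $W$ i.i.d.\ Bernoulli$(p)$ variables with $p > \epsilon$, and in particular $\mathbb{E}[\mathcal{R}_W \mid \omega_1,\ldots,\omega_M] = p > \epsilon$.

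Next, I would apply the multiplicative Chernoff lower-tail bound: for i.i.d.\ Bernoulli$(p)$ random variables and any $\gamma \in (0,1)$,
\begin{equation*}
\mathbb{P}\bigl(\mathcal{R}_W \leq (1-\gamma)p \bigr) \leq \exp\bigl(-Wp\gamma^2/2\bigr).
\end{equation*}
Taking $\gamma = 1/2$ gives $\mathbb{P}(\mathcal{R}_W \leq p/2) \leq \exp(-Wp/8)$. On the conditioning event $\{\pi(\mathcal{U}_M) > \epsilon\}$ we have $\epsilon/2 < p/2$, so $\{\mathcal{R}_W < \epsilon/2\} \subseteq \{\mathcal{R}_W < p/2\}$ and the quantity $\exp(-Wp/8)$ is monotone-decreasing in $p$, hence bounded by $\exp(-W\epsilon/8)$ uniformly on this event.

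To finish, I would integrate the conditional bound over the histories lying in $\{\pi(\mathcal{U}_M) > \epsilon\}$: since the conditional bound $\exp(-W\epsilon/8)$ does not depend on the particular history, we obtain
\begin{equation*}
\mathbb{P}\!\left(\mathcal{R}_W < \tfrac{\epsilon}{2} \,\Big|\, \pi(\mathcal{U}_M) > \epsilon\right) \leq \exp(-W\epsilon/8),
\end{equation*}
and requiring the right-hand side to be less than $\delta$ yields $W > (8/\epsilon)\log(1/\delta)$, which matches \eqref{eq:window_size}.

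There is no real analytic obstacle; the only thing one must be careful about is the conditioning. The event $\{\pi(\mathcal{U}_M) > \epsilon\}$ is measurable with respect to $(\omega_1,\ldots,\omega_M)$, so conditioning on it does not disturb the distribution of the fresh samples $\omega_{M+1},\ldots,\omega_{M+W}$, which is what lets us invoke i.i.d.\ Chernoff cleanly. The rest is a direct computation with the standard multiplicative tail bound.
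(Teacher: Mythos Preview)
Your proposal is correct and follows essentially the same approach as the paper: condition on the first $M$ samples so that $\pi(\mathcal{U}_M)$ is fixed, observe that the $X_i$ are then i.i.d.\ Bernoulli with success probability $p>\epsilon$, apply a Chernoff lower-tail bound to obtain $\exp(-W\epsilon/8)$, and integrate over histories. The only cosmetic difference is that the paper states the Chernoff bound in its additive form $\exp\!\bigl(-W(p-\gamma)^2/(2p)\bigr)$ with $\gamma=\epsilon/2$, whereas you use the multiplicative form with $\gamma=1/2$; both yield the same $\exp(-W\epsilon/8)$.
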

    
    Essentially, Theorem~\ref{thm:coverage_test} tells us that it is unlikely to observe an average rate of discovery $< \epsilon/2$ over the window size $W$ if the probability mass of the unobserved set of bases $\pi(\mathcal{U}_M)$ has fallen below a predefined threshold $\epsilon$. By choosing parameters $M,~W,~\epsilon$ and $\delta$, we can check whether the rate of discovery at the end of our experiment has become sufficiently low to guarantee that we did not miss too much probability mass.
    
    \vspace*{0.5em}
    \noindent\rule{\linewidth}{1pt} \\[0pt]
    \noindent \textbf{Probability Mass of Unobserved Set: $\epsilon, \delta, M, W$}  \\[-4pt]
    \noindent\rule{\linewidth}{1pt}
    \noindent Draw $M+W$ i.i.d. samples from the uncertainty distribution, with $W > \frac{8}{\epsilon} \log \frac{1}{\delta}$ and solve the OPF for each scenario to obtain the associated optimal bases $\ac_1, \ldots, \ac_{M+W}$. 
    If $\mathcal{R}_W > \epsilon/2$, declare that the learning procedure is inconclusive, otherwise declare success.
    \vspace*{0.5em}
    \noindent\rule{\linewidth}{1pt}

	\subsection{Ordering of Bases by Empirical Probability} \label{sec:learning-procedure}
	Equipped with Theorem~\ref{thm:coverage_test} to provide guarantees for the probability mass contained in the observed bases, we use the empirically observed probability of the bases to provide an ordering that identifies an ensemble $I$ which is optimal for the OPF problem with high probability.
	
    \vspace*{0.5em}
    \noindent\rule{\linewidth}{1pt} \\[0pt]
    \noindent \textbf{Learn Important Bases: $\epsilon, \delta, K$}  \\[-4pt]
    \noindent\rule{\linewidth}{1pt}
    
    \noindent \textbf{\textit{Step 1:}} Order the bases observed in the first $M$ samples according to their empirical probability of observation given by $\hat{\pi}(\ac) = \frac{1}{M} \sum_{i=1}^{M} \mathbf{1}_{\ac_i = \ac}$. Let $\ac_{i_1}, \ldots, \ac_{i_K}$
    be the first $K$ bases in the ordering.
    
    \noindent \textbf{\textit{Step 2:}} Form an ensemble of size $K$ using the bases $\ac_{i_1},\ldots,\ac_{i_K}$ to construct the ensemble policy in \eqref{eq:ensemble_policy}.
    \vspace*{0.5em}
    \noindent\rule{\linewidth}{1pt}

	\section{Numerical Results} \label{sec:numerical-results}
	
	We demonstrate the efficacy of our statistical learning procedure in providing feasible and optimal solutions with a high probability, by running extensive simulations across a range of different systems, and report the results with accompanying discussions in the next two sections.
	
	To evaluate the performance, we ran the learning procedure (described in Section \ref{sec:learning-procedure}) across a variety of networks \cite{li2010small,lesieutre2011examining,grigg1999ieee,birchfield2017grid,price2011reduced} from the IEEE PES PGLib-OPF v17.08 benchmark library \cite{pglib_opf}, performing the evaluations in Julia v0.6 \cite{julia}, using JuMP v0.17 \cite{DunningHuchetteLubin2017} and PowerModels.jl v0.5 \cite{power_models}. We report general results for 15 different test cases, varying in size from 3 to 1951 buses, and provide an overview of the characteristics of the considered systems in Table \ref{tab:characteristics}.
    For each system, we assume that the loads are uncertain, and follow a multivariate normal distribution. Hence, $\omega$ is defined as random vector of independent, zero mean variables and standard deviations $\sigma$, which are defined as a fraction of the load (referred to as $\sigma$-scaling in the following). We assume zero correlation between loads. Note that the assumption of a normal distribution is not necessary, but is chosen to enable a test set-up which is easy to replicate. 	
    
    	\begin{table}[b!]
\centering
\begin{tabular}{|l|c|c|c|c|c|c|}
\hline
\multirow{2}{*}{Case Name} & \multirow{2}{*}{Buses} & \multirow{2}{*}{Lines} & Gene- & Cons- & Infeasible \\
          &       &       & rators & straints & Scenarios \\
\hline
case3\_lmbd & 3 & 3 & 3 & 13 & 0\\
case5\_pjm & 5 & 6 & 5 & 23 & 0\\
case14\_ieee & 14 & 20 & 5 & 51 & 0\\
case24\_ieee\_rts & 24 & 38 & 33 & 143 & 0\\
case30\_ieee & 30 & 41 & 6 & 95 & 8\\
case39\_epri & 39 & 46 & 10 & 113 & 0\\
case57\_ieee & 57 & 80 & 7 & 175 & 0\\
case73\_ieee\_rts & 73 & 120 & 99 & 439 & 0\\
case118\_ieee & 118 & 186 & 54 & 481 & 0\\
case162\_ieee\_dtc & 162 & 284 & 12 & 593 & 88\\
case200\_pserc & 200 & 245 & 38 & 567 & 23\\
case240\_pserc & 240 & 448 & 143 & 1183 & 15\\
case300\_ieee & 300 & 411 & 69 & 961 & 0\\
case1888\_rte & 1888 & 2531 & 290 & 5643 & 0\\
case1951\_rte & 1951 & 2596 & 366 & 5925 & 0\\
\hline
    \end{tabular}
    \caption{\small Summary of the network characteristics  ($\sigma$-scaling=0.03), as well as the number of scenarios that are infeasible for the OPF in each system, considering 10'000 samples.}
    \label{tab:characteristics}
\end{table}

    \subsection{Learning Important Bases}
	First, we take a look at the properties of the learning process. For an increasing number of samples $M$, we first assess the number of unique bases observed for different levels of volatility and systems of varying size, and then evaluate the cumulative proportion of all scenarios covered by the set of bases $\mathcal{O}_M$ that has been observed. At the end of the experiment, we check whether our criterion based on the average rate of innovation guarantees that the set of discovered bases contain a minimum level of probability mass.  
	
	For our assessment, we first generated $M=5000$ samples from the distribution on the buses assuming a $\sigma$-scaling = 0.03. We then solve the OPF for each one of them, keeping track of the basis corresponding to each scenario, as well as the number of scenarios that lead to OPF infeasibility (i.e., the number of non-recoverable scenarios). 
    The total number of infeasible samples is listed along with the system characteristics in Table \ref{tab:characteristics}, and is reasonably small for each system.
	In addition to the results for each PGLib system, we perform a more in-depth analysis of the IEEE 300 bus system, by considering 10'000 samples and different $\sigma$-scalings $\sigma=\{0.01,\,0.02,\,0.03,\,0.04,\,0.05\}$.

	\subsubsection{Number of Unique Bases} We first assess the number of unique bases observed for an increasing number of samples. The number of unique bases that have been discovered after a given number of samples have been drawn are shown in Figure \ref{fig:nbases300vsnscenarios300} for the IEEE 300 bus system with varying levels of uncertainty and in Table \ref{tab:n-unique-bases} for the different PGLib cases. 
	
	The results for the IEEE 300 bus system corroborates our intuition that the number of unique optimal bases is correlated with the volatility of the load variation. At lower levels of volatility (when $\sigma$-scaling is lower than $0.04$), the number of new unique bases quickly stabilizes at a low number. At higher levels of volatility, the number of unique bases grew more rapidly and never stabilized. At these higher levels of volatility, the system was infeasible more frequently.
	
	The results for the PGLib cases show that for most systems, the number of unique bases stabilizes at a low value after 5000 samples. Smaller systems such as the case24\_ieee\_rts observing a similar number of unique bases as the case1951\_rte, showing that system size is not a good indicator of the number of unique bases. Instead, the important factor is whether the system has a frequently changing set of active constraints. A special case for this is the case240\_pserc, for which we observe a large and non-stabilizing number of bases.
	
	\begin{figure}[t!]
	\centering
	\includegraphics[width=0.8\linewidth]{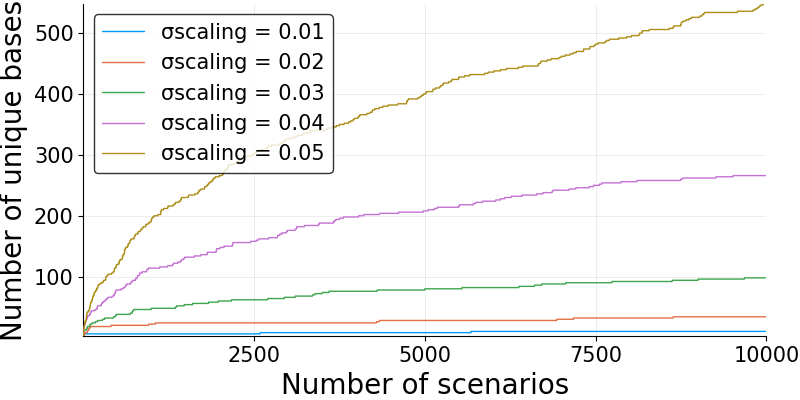}
	\caption{\small Number of unique optimal bases identified for a given number of samples for the IEEE 300 bus system with varying $\sigma$-scaling.}
	\label{fig:nbases300vsnscenarios300}
	\vspace{+8pt}
	\includegraphics[width=0.8\linewidth]{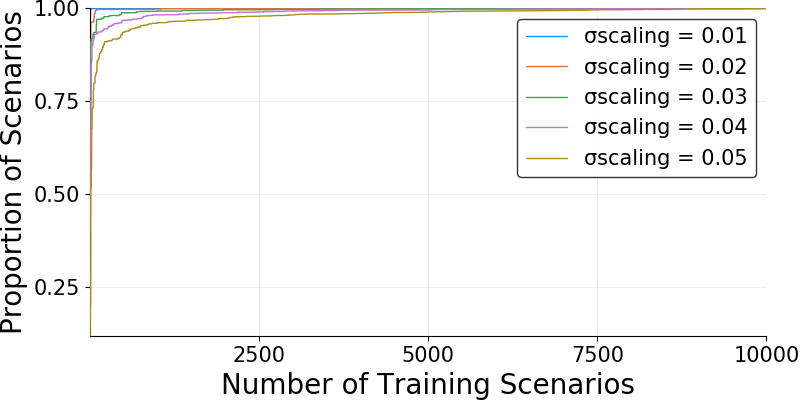}
	\caption{\small Proportion of all 10'000 scenarios covered by the already identified bases.}
	\label{fig:proportion300vsnscenarios300}
	\end{figure}
	
	\subsubsection{Discovery of Important Bases as a Function of Samples}
	Intuitively, we expect the more important bases would have a higher probability of being identified by any given scenario, and hence have a high chance of being identified early on in the training process. To assess whether our intuition holds, for each new scenario, we collect the bases that have been identified up til then, and plot the proportion of all scenarios that correspond to any of the already identified bases. The results are shown graphically for the IEEE 300 bus system in Figure \ref{fig:proportion300vsnscenarios300}, and for the PGLib cases in Table \ref{tab:my_label}. The results are encouraging in suggesting that the important bases are indeed found early in the training phase, rather than being sporadically identified late into the training phase. 

    \begin{table}[t!]
    \centering
    \begin{tabular}{|l|c|c|c|c|c|c|}
    \hline
    \# of samples & 100 & 200 & 500 & 1000 & 2500 & 5000 \\
    \hline
    case3\_lmbd & 1 & 1 & 1 & 1 & 1 & 1\\
    case5\_pjm & 1 & 1 & 1 & 1 & 1 & 1\\
    case14\_ieee & 1 & 1 & 1 & 1 & 1 & 1\\
    case24\_ieee\_rts & 5 & 5 & 5 & 5 & 10 & 10\\
    case30\_ieee & 1 & 1 & 1 & 1 & 1 & 1\\
    case39\_epri & 2 & 2 & 2 & 2 & 2 & 2\\
    case57\_ieee & 2 & 2 & 3 & 3 & 3 & 3\\
    case73\_ieee\_rts & 13 & 14 & 14 & 21 & 27 & 27\\
    case118\_ieee & 2 & 2 & 2 & 2 & 2 & 2\\
    case162\_ieee\_dtc & 7 & 8 & 9 & 9 & 11 & 11\\
    case200\_pserc & 43 & 58 & 88 & 109 & 162 & 180\\
    case240\_pserc & 69 & 114 & 239 & 391 & 707 & 1114\\
    case300\_ieee & 9 & 11 & 18 & 22 & 26 & 37\\
    case1888\_rte & 3 & 3 & 3 & 3 & 3 & 3\\
    case1951\_rte & 5 & 6 & 7 & 7 & 9 & 10\\
    \hline
    \end{tabular}
    \caption{\small Number of unique optimal bases identified for a given number of samples at $\sigma$-scaling=0.03.}
    \label{tab:n-unique-bases}
    \end{table}
	
	\subsubsection{Probabilistic Guarantees for the Discovered Bases}
	Finally, we apply the results in Theorem \ref{thm:coverage_test} to assess whether we are able to guarantee that the discovered bases cover a sufficient probability mass. For our calculations, we assume that the probability mass of the undiscovered set should not exceed $\epsilon=2$\% with confidence $1-\delta=90$\%. This corresponds to checking whether the average rate of discovery $\mathcal{R}_W$ for the last $W=921$ samples given by \eqref{eq:window_size}, has fallen below $\epsilon/2=1$\%. 
	Checking this criterion for the IEEE 300 bus system and the PGLib cases, we observe that the criterion is mostly satisfied with rates of discovery $<0.3$\%, indicating that we can say with confidence that the most important bases have been discovered. If we choose to include all observed bases in our ensemble policy, we will obtain optimal solutions with probability $1-\epsilon = 98$\%. 
	The only systems for which the criterion does not hold are the IEEE 300 bus cases with $\sigma$-scaling$={0.04,\,0.05}$, case200\_pserc and case240\_pserc. For the former three cases, the rate of discovery is $<2.6$\%, while the last case has a rate of discovery of about $15$\%. These corresponds to the test systems for which the number of bases had grown very large at the end of the experiment, indicating that there are many relevant bases which occur with low probability and that many of those have not yet been observed.
	
	\subsection{Performance of Ensemble Policy}
	We investigate how the number of bases considered in the ensemble policy affects its performance, in terms of providing both feasible and optimal results. We rank the bases based on their empirical probabilities and construct ensemble policies with an increasing number of bases. We then evaluate their performance in an out-of-sample test based on 5000 new samples generated by the same multivariate normal distributions described above, using a $\sigma$-scaling=0.03 for the PGLib cases and varying $\sigma$-scalings $\sigma=\{0.01,\,0.02,\,0.03,\,0.04,\,0.05\}$ for the IEEE 300 bus system.
	
	\begin{table}[t!]
    \centering
    \begin{tabular}{|l|c|c|c|c|c|c|c|}
    \hline
    \# of samples & 100 & 200 & 500 & 1000 & 2500 & 5000 \\
    \hline
    case3\_lmbd & 1.000 & 1.000 & 1.000 & 1.000 & 1.000 & 1.000\\
    case5\_pjm & 1.000 & 1.000 & 1.000 & 1.000 & 1.000 & 1.000\\
    case14\_ieee & 1.000 & 1.000 & 1.000 & 1.000 & 1.000 & 1.000\\
    case24\_ieee\_rts & 0.613 & 0.613 & 0.613 & 0.613 & 1.000 & 1.000\\
    case30\_ieee & 0.998 & 0.998 & 0.998 & 0.998 & 0.998 & 0.998\\
    case39\_epri & 1.000 & 1.000 & 1.000 & 1.000 & 1.000 & 1.000\\
    case57\_ieee & 1.000 & 1.000 & 1.000 & 1.000 & 1.000 & 1.000\\
    case73\_ieee\_rts & 0.000 & 0.000 & 0.000 & 0.942 & 1.000 & 1.000\\
    case118\_ieee & 1.000 & 1.000 & 1.000 & 1.000 & 1.000 & 1.000\\
    case162\_ieee\_dtc & 0.978 & 0.981 & 0.981 & 0.981 & 0.982 & 0.982\\
    case200\_pserc & 0.689 & 0.792 & 0.900 & 0.938 & 0.975 & 0.988\\
    case240\_pserc & 0.494 & 0.574 & 0.664 & 0.747 & 0.809 & 0.847\\
    case300\_ieee & 0.961 & 0.973 & 0.983 & 0.991 & 0.993 & 0.998\\
    case1888\_rte & 1.000 & 1.000 & 1.000 & 1.000 & 1.000 & 1.000\\
    case1951\_rte & 0.994 & 0.995 & 0.998 & 0.998 & 1.000 & 1.000\\
    \hline
    \end{tabular}
    \caption{\small Cumulative proportion of scenarios covered by the already observed basis for a given number of samples at $\sigma$-scaling=0.03, based on an out-of-sample test with 5000 scenarios drawn from the multivariate normal distribution.}
    \label{tab:my_label}
    \end{table}
	
	\begin{figure}[b!]
		\centering
		\includegraphics[width=0.8\linewidth]{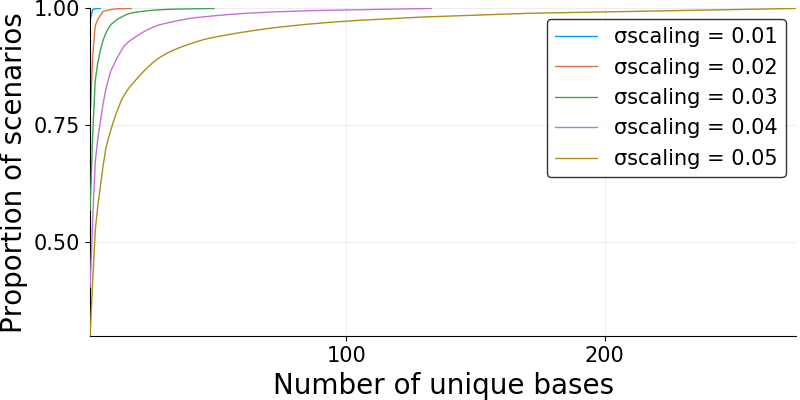}
		\caption{\small Proportion of all scenarios covered by a given number of bases (included in order of maximum probability).}
		\label{fig:proportion300vsnbases300}
	\end{figure}
	
	    \begin{table}[]
    \centering
    \begin{tabular}{|l||c c c||c c c|}
    \multicolumn{1}{l}{} & \multicolumn{3}{c}{\textbf{Optimal solutions}} & \multicolumn{3}{c}{\textbf{Feasible solutions}} \\
    \hline                   
    \# of bases        & 5 & 10 & 100                 & 5 & 10 & 100 \\
    \hline
    case3\_lmbd        & 1.000 & 1.000 & 1.000        & 1.000 & 1.000 & 1.000\\
    case5\_pjm         & 1.000 & 1.000 & 1.000        & 1.000 & 1.000 & 1.000\\
    case14\_ieee       & 1.000 & 1.000 & 1.000        & 1.000 & 1.000 & 1.000\\
    case24\_ieee\_rts  & 0.932 & 1.000 & 1.000        & 0.968 & 1.000 & 1.000\\
    case30\_ieee        & 1.000 & 1.000 & 1.000        & 1.000 & 1.000 & 1.000\\
    case39\_epri        & 1.000 & 1.000 & 1.000        & 1.000 & 1.000 & 1.000\\
    case57\_ieee        & 1.000 & 1.000 & 1.000        & 1.000 & 1.000 & 1.000\\
    case73\_ieee\_rts   & 0.900 & 0.981 & 1.000        & 0.991 & 0.991 & 1.000\\
    case118\_ieee       & 1.000 & 1.000 & 1.000     & 1.000 & 1.000 & 1.000\\
    case162\_ieee\_dtc  & 0.983 & 0.999 & 1.000     & 0.983 & 0.999 & 1.000\\
    case200\_pserc      & 0.345 & 0.476 & 0.949     & 0.623 & 1.000 & 1.000\\
    case240\_pserc      & 0.270 & 0.355 & 0.663     & 0.270 & 0.355 & 0.664\\
    case300\_ieee       & 0.903 & 0.972 & 1.000     & 0.903 & 0.972 & 1.000\\
    case1888\_rte       & 1.000 & 1.000 & 1.000     & 1.000 & 1.000 & 1.000 \\
    case1951\_rte       & 0.994 & 1.000 & 1.000     & 0.994 & 1.000 & 1.000\\
    \hline
    \end{tabular}
    \caption{\small Proportion of scenarios for which the ensemble policy returns an optimal (left) or feasible (right) solution, based on the given number of bases at $\sigma$-scaling=0.03.}
    \label{tab:optimal_feasible}
    \end{table}
	
	Figure \ref{fig:proportion300vsnbases300} shows the results for the IEEE 300 bus system. The quicker the curve saturates to a value close to $1$, the better the performance is, since it indicates that with a small ensemble of basis policies (and hence fewer computations), one can obtain optimal (and feasible) solutions with a high probability. Unsurprisingly, the number of bases required to obtain optimal solutions for a given probability increases as the $\sigma$-scaling increases. 
	Similar results for the PGLib cases are shown on the left of Table \ref{tab:optimal_feasible}. For most systems, an ensemble policy with 10 bases is already sufficient to obtain optimal solution for a high proportion $>0.99$ of the scenarios. The two systems for which we were not able to guarantee that the probability mass of the undiscovered bases was less than $\epsilon=0.02$, case200\_pserc and case240\_pserc, the proportion of optimal scenarios is much lower, even with 100 bases. 
	
    While the probability of obtaining optimal solutions is an important performance criterion for the ensemble policy, system operators also care about feasibility of system operation. To assess the feasibility performance, the right part of Table \ref{tab:optimal_feasible} shows the proportion of feasible scenarios for an ensemble policy with different number of bases, for each PGLib case. For most cases, the proportion of optimal scenarios is very close to the proportion of feasible scenarios, indicating that the ensemble policy either returns optimal or infeasible solutions. One exception is the case200\_pserc, which has a much higher level of feasibility than optimality.

	\section{Conclusions}
	\label{sec:conclusions}
	We develop ensemble control policies to solve the real-time DC-OPF by combining the affine basis polices corresponding to the basic feasible solutions of the linear program. Although the computational complexity of the ensemble policy is dictated by the number of constituent bases which can be exponentially many in the size of the system, we show that only a few of those are relevant to power systems operations under uncertainty. We adopt a statistical learning approach to learn the important bases from experiments with a limited number of samples, and provide theoretical results that justify the learning procedure. We show that for almost all systems, regardless of its size, an ensemble policy with only $10$ bases.
    
    Future work will involve extending the theoretical results, as well as applications to a wider range of power systems problems, including the non-linear AC OPF problem

	\appendix
	
	\begin{proof}[Proof of Theorem~\ref{thm:coverage_test}]
	 For any $0<\gamma<\epsilon$ we have
	 \begin{align}
	     \mathbb{P}(\mathcal{R}_W &< \gamma \mid \pi(\mathcal{U}_M) > \epsilon)     \nonumber \\ 
	     &= \sum_{u:\pi(u)>\epsilon}    \mathbb{P}\left(\mathcal{R}_W < \gamma \mid \mathcal{U}_M = u \right) \mathbb{P}(\mathcal{U}_M=u) \nonumber \\
	     &= \sum_{u:\pi(u)>\epsilon} \mathbb{P}\left(\mathcal{R}_W < \gamma \mid \mathcal{U}_M = u \right) \mathbb{P}(\mathcal{U}_M = u)
	     \label{eq:partial_proof}
	 \end{align}
	 Conditioned on $\mathcal{U}_M=u$ the random variables $X_i$ for are Bernoulli random variables with probabiliy of success $p > \pi(u) > \epsilon$. Then by the Chernoff inequality, we have
	 \begin{align*}
	     \mathbb{P}\left(\mathcal{R}_W < \gamma \mid \mathcal{U}_M=u\right) &< e^-{\frac{(\sum p_i - W \gamma)^2}{2\sum p_i}} \\
	     &\leq e^-{\frac{W(\epsilon-\gamma)^2}{2\epsilon}}.
	 \end{align*}
	 Using the above in \eqref{eq:partial_proof} and using $\gamma = \epsilon/2$ we get
	 \begin{align*}
	    \mathbb{P}\left(\mathcal{R}_W < \gamma \mid \pi(\mathcal{U}_M) > \epsilon \right) &\leq \sum_{u:\pi(u)>\epsilon} 
	    e^{-\frac{W\epsilon}{8}} \mathbb{P}(\mathcal{U}_M = u) \\
	    &\leq e^{-\frac{W\epsilon}{8}} < \delta,
	 \end{align*}
	 where the last inequality follows from the assumption on the window size $W$ in \eqref{eq:window_size}.
	\end{proof}
	
	\bibliographystyle{IEEEtran}
	\bibliography{pscc2018.bib}{}

\begin{thebibliography}{10}
\providecommand{\url}[1]{#1}
\csname url@samestyle\endcsname
\providecommand{\newblock}{\relax}
\providecommand{\bibinfo}[2]{#2}
\providecommand{\BIBentrySTDinterwordspacing}{\spaceskip=0pt\relax}
\providecommand{\BIBentryALTinterwordstretchfactor}{4}
\providecommand{\BIBentryALTinterwordspacing}{\spaceskip=\fontdimen2\font plus
\BIBentryALTinterwordstretchfactor\fontdimen3\font minus
  \fontdimen4\font\relax}
\providecommand{\BIBforeignlanguage}[2]{{%
\expandafter\ifx\csname l@#1\endcsname\relax
\typeout{** WARNING: IEEEtran.bst: No hyphenation pattern has been}%
\typeout{** loaded for the language `#1'. Using the pattern for}%
\typeout{** the default language instead.}%
\else
\language=\csname l@#1\endcsname
\fi
#2}}
\providecommand{\BIBdecl}{\relax}
\BIBdecl

\bibitem{borkowska}
B.~Borkowska, ``Probabilistic load flow,'' \emph{IEEE Transactions on Power
  App. Syst.}, vol. PAS-93, no.~3, pp. 752--759, 1974.

\bibitem{vrakopoulou2012}
M.~Vrakopoulou, K.~Margellos, J.~Lygeros, and G.~Andersson, ``{Probabilistic
  Guarantees for the N-1 Security of Systems with Wind Power Generation},'' in
  \emph{{Probabilistic Methods Applied to Power Systems (PMAPS)}}, Istanbul,
  Turkey, 2012.

\bibitem{roald2017corrective}
L.~Roald, S.~Misra, T.~Krause, and G.~Andersson, ``Corrective control to handle
  forecast uncertainty: A chance constrained optimal power flow,'' \emph{IEEE
  Trans. Power Systems}, vol.~32, no.~2, pp. 1626--1637, 2017.

\bibitem{roald2015optimal}
L.~Roald, S.~Misra, M.~Chertkov, and G.~Andersson, ``Optimal power flow with
  weighted chance constraints and general policies for generation control,'' in
  \emph{Decision and Control (CDC), 2015 IEEE 54th Annual Conference on}.\hskip
  1em plus 0.5em minus 0.4em\relax IEEE, 2015, pp. 6927--6933.

\bibitem{bemporad2002model}
A.~Bemporad, F.~Borrelli, M.~Morari \emph{et~al.}, ``Model predictive control
  based on linear programming\~{} the explicit solution,'' \emph{IEEE Trans.
  Automatic Control}, vol.~47, no.~12, pp. 1974--1985, 2002.

\bibitem{borrelli2003geometric}
F.~Borrelli, A.~Bemporad, and M.~Morari, ``Geometric algorithm for
  multiparametric linear programming,'' \emph{J. of Optimization Theory and
  Applications}, vol. 118, no.~3, pp. 515--540, 2003.

\bibitem{alessio2009survey}
A.~Alessio and A.~Bemporad, ``A survey on explicit model predictive control,''
  in \emph{Nonlinear model predictive control}.\hskip 1em plus 0.5em minus
  0.4em\relax Springer, 2009, pp. 345--369.

\bibitem{Vrakopoulou2017-ek}
M.~Vrakopoulou and I.~A. Hiskens, ``Optimal policy-based control of generation
  and {HVDC} lines in power systems under uncertainty,'' in \emph{2017 {IEEE}
  Manchester {PowerTech}}, Jun. 2017, pp. 1--6.

\bibitem{herceg2013multi}
M.~Herceg, M.~Kvasnica, C.~N. Jones, and M.~Morari, ``Multi-parametric toolbox
  3.0,'' in \emph{Control Conference (ECC), 2013 European}.\hskip 1em plus
  0.5em minus 0.4em\relax IEEE, 2013, pp. 502--510.

\bibitem{parisini1995receding}
T.~Parisini and R.~Zoppoli, ``A receding-horizon regulator for nonlinear
  systems and a neural approximation,'' \emph{Automatica}, vol.~31, no.~10, pp.
  1443--1451, 1995.

\bibitem{bemporad2003suboptimal}
A.~Bemporad and C.~Filippi, ``Suboptimal explicit receding horizon control via
  approximate multiparametric quadratic programming,'' \emph{J. of Optimization
  Theory and Applications}, vol. 117, no.~1, pp. 9--38, 2003.

\bibitem{johansen2003approximate}
T.~A. Johansen and A.~Grancharova, ``Approximate explicit constrained linear
  model predictive control via orthogonal search tree,'' \emph{IEEE Trans.
  Automatic Control}, vol.~48, no.~5, pp. 810--815, 2003.

\bibitem{de2004robust}
M.~de~la Pena, A.~Bemporad, and C.~Filippi, ``Robust explicit mpc based on
  approximate multi-parametric convex programming,'' in \emph{Decision and
  Control, 2004. CDC. 43rd IEEE Conference on}, vol.~3.\hskip 1em plus 0.5em
  minus 0.4em\relax IEEE, 2004, pp. 2491--2496.

\bibitem{christophersen2007controller}
F.~J. Christophersen, M.~N. Zeilinger, C.~N. Jones, and M.~Morari, ``Controller
  complexity reduction for piecewise affine systems through safe region
  elimination,'' in \emph{Decision and Control, 2007 46th IEEE Conference
  on}.\hskip 1em plus 0.5em minus 0.4em\relax IEEE, 2007, pp. 4773--4778.

\bibitem{canale2009set}
M.~Canale, L.~Fagiano, and M.~Milanese, ``Set membership approximation theory
  for fast implementation of model predictive control laws,''
  \emph{Automatica}, vol.~45, no.~1, pp. 45--54, 2009.

\bibitem{bemporad2011ultra}
A.~Bemporad, A.~Oliveri, T.~Poggi, and M.~Storace, ``Ultra-fast stabilizing
  model predictive control via canonical piecewise affine approximations,''
  \emph{IEEE Trans. Automatic Control}, vol.~56, no.~12, pp. 2883--2897, 2011.

\bibitem{Geng2015-af}
X.~Geng and L.~Xie, ``A data-driven approach to identifying system pattern
  regions in market operations,'' in \emph{2015 {IEEE} Power Energy Society
  General Meeting}, Jul. 2015, pp. 1--5.

\bibitem{christie2000transmission}
R.~D. Christie, B.~F. Wollenberg, and I.~Wangensteen, ``Transmission management
  in the deregulated environment,'' \emph{Proceedings of the IEEE}, vol.~88,
  no.~2, pp. 170--195, 2000.

\bibitem{walkup1969lifting}
D.~Walkup and R.~Wets, ``Lifting projections of convex polyhedra,''
  \emph{Pacific Journal of Mathematics}, vol.~28, no.~2, pp. 465--475, 1969.

\bibitem{bertsimas1997introduction}
D.~Bertsimas and J.~N. Tsitsiklis, \emph{Introduction to linear
  optimization}.\hskip 1em plus 0.5em minus 0.4em\relax Athena Scientific
  Belmont, MA, 1997, vol.~6.

\bibitem{li2010small}
F.~Li and R.~Bo, ``Small test systems for power system economic studies,'' in
  \emph{IEEE PES General Meeting}.\hskip 1em plus 0.5em minus 0.4em\relax IEEE,
  2010, pp. 1--4.

\bibitem{lesieutre2011examining}
B.~C. Lesieutre, D.~K. Molzahn, A.~R. Borden, and C.~L. DeMarco, ``Examining
  the limits of the application of semidefinite programming to power flow
  problems,'' in \emph{49th Allerton Conference}.\hskip 1em plus 0.5em minus
  0.4em\relax IEEE, 2011, pp. 1492--1499.

\bibitem{grigg1999ieee}
C.~Grigg, P.~Wong, P.~Albrecht, R.~Allan, M.~Bhavaraju, R.~Billinton, Q.~Chen,
  C.~Fong, S.~Haddad, S.~Kuruganty \emph{et~al.}, ``{The IEEE Reliability Test
  System-1996. A report prepared by the reliability test system task force of
  the application of probability methods subcommittee},'' \emph{IEEE Trans.
  Power Systems}, vol.~14, no.~3, pp. 1010--1020, 1999.

\bibitem{birchfield2017grid}
A.~B. Birchfield, T.~Xu, K.~M. Gegner, K.~S. Shetye, and T.~J. Overbye, ``Grid
  structural characteristics as validation criteria for synthetic networks,''
  \emph{IEEE Trans. Power Systems}, vol.~32, no.~4, pp. 3258--3265, 2017.

\bibitem{price2011reduced}
J.~E. Price and J.~Goodin, ``{Reduced network modeling of WECC as a market
  design prototype},'' in \emph{IEEE PES General Meeting}.\hskip 1em plus 0.5em
  minus 0.4em\relax IEEE, 2011, pp. 1--6.

\bibitem{pglib_opf}
{The IEEE PES Task Force on Benchmarks for Validation of Emerging Power System
  Algorithms}, ``{PGLib Optimal Power Flow Benchmarks},'' Published online at
  \url{https://github.com/power-grid-lib/pglib-opf}, accessed: October 4, 2017.

\bibitem{julia}
\BIBentryALTinterwordspacing
J.~Bezanson, S.~Karpinski, V.~B. Shah, and A.~Edelman, ``{Julia: A Fast Dynamic
  Language for Technical Computing},'' \emph{CoRR}, vol. abs/1209.5145, 2012.
  [Online]. Available: \url{https://arxiv.org/abs/1209.5145}
\BIBentrySTDinterwordspacing

\bibitem{DunningHuchetteLubin2017}
I.~Dunning, J.~Huchette, and M.~Lubin, ``{JuMP}: A modeling language for
  mathematical optimization,'' \emph{SIAM Review}, vol.~59, no.~2, pp.
  295--320, 2017.

\bibitem{power_models}
{Los Alamos National Laboratory, Advanced Network Science Initiative},
  ``{PowerModels.jl},'' \url{https://github.com/lanl-ansi/PowerModels.jl}.

\end{thebibliography}
	
\end{document}